\documentclass[english]{article}
\usepackage[T1]{fontenc}
\usepackage[latin9]{inputenc}
\usepackage{color}
\usepackage{amssymb}

\usepackage[T1]{fontenc}
\usepackage[latin9]{inputenc}
\usepackage{color}
\usepackage{amsmath}
\usepackage{graphicx}
\usepackage{amssymb}
\usepackage{amsfonts}

\usepackage{babel}%
\setcounter{MaxMatrixCols}{30}

\providecommand{\U}[1]{\protect\rule{.1in}{.1in}}

\newtheorem{theorem}{Theorem}

\newenvironment{proof}[1][Proof]{\noindent\textbf{#1.} }{\ \rule{0.5em}{0.5em}}

\usepackage{babel}

\begin{document}

\title{A Sum Theorem for (FPV) Operators and Normal Cones}

\author{M.D. Voisei}
\date{}

\maketitle
\begin{abstract}
On \cite[p.\ 199]{MR2386931} one says ``We mention parenthetically
that the proof of {[}99, Lemma 41.3{]} is incorrect, and we do not
know whether it, {[}99, Theorem 41.5{]} and {[}99, Theorem 41.6{]}
are true''. The previously cited reference {[}99{]} is our reference
\cite{MR1723737}. The aim of this short note is to provide a result
that improves upon \cite[Lemma 41.\ 3]{MR1723737}.
\end{abstract}

Recall that in the context of a Banach space $X$ with dual $X^{*}$
and coupling $c(x,x^{*})=\langle x,x^{*}\rangle=x^{*}(x)$, $(x,x^{*})\in X\times X^{*}$:
\begin{itemize}
\item $\varphi_{S}(x,x^{*})=\sup\{\langle x-s,s^{*}\rangle+\langle s,x^{*}\rangle\mid(s,s^{*})\}$,
$(x,x^{*})\in X\times X^{*}$ stands for the Fitzpatrick function
of $S\subset X\times X^{*}$,
\item $z=(x,x^{*})$ is monotonically related to (m.r.t. for short) $S$
comes to $z\in[\varphi_{S}\le c]:=\{w\in X\times X^{*}\mid\varphi_{S}(w)\le c(w)\}$,
\item $A$ is of type {\it (FPV)} if for every open convex $V\subset X$ with
$V\cap D(A)\neq\emptyset$ if $z=(x,x^{*})$ is monotonically related
to (m.r.t. for short) $A|_{V}$ and $x\in V$ then $z\in A$ or equivalently
if $z=(x,x^{*})\not\in A$ and $x\in V$ then there is $(a,a^{*})\in A|_{V}$
such that $\langle x-a,x^{*}-a^{*}\rangle<0$. Here $\operatorname*{Graph}(A|_{S})=\operatorname*{Graph}(A)\cap S\times X^{*}$,
$S\subset X$ (see e.g. \cite[p.\ 268]{MR1249266}, \cite[Def.\ 36.7]{MR2386931}).
In other words $A$ is of type (FPV) if, for every open convex $V\subset X$
with $V\cap D(A)\neq\emptyset$, $A|_{V}$ is maximal monotone in
$V\times X^{*}$,
\item $x\in\operatorname*{cen}D(A)$ means the segment $[x,y]:=\{tx+(1-t)y\mid0\le t\le1\}\subset D(A)$,
for every $y\in D(A)$.
\end{itemize}

Let us introduce a new class of operators:
\begin{itemize}
\item $A$ is called of type \emph{weak-(FPV)} if for every open convex
$V\subset X$ with $V\cap D(A)\neq\emptyset$ if $z=(x,x^{*})$ is
monotonically related to $A|_{V}$ and $x\in V$ then $x\in D(A)$
or equivalently for every $z=(x,x^{*})\in V\setminus D(A)\times X^{*}$
there is $(a,a^{*})\in A|_{V}$ such that $\langle x-a,x^{*}-a^{*}\rangle<0$.
In other words $A$ is of type weak-(FPV) if for every open convex
$V\subset X$ with $V\cap D(A)\neq\emptyset$, $A|_{V}$ cannot be
extended, as a monotone operator in $V\times X^{*}$, outside $D(A)\cap V$.
\end{itemize}

\begin{theorem} \label{T1}Let $X$ be a Banach space, $C\subset X$
closed convex, and $A:X\rightrightarrows X^{*}$ be \textcolor{black}{maximal
monotone and of type weak-(FPV) with} $\operatorname*{cen}D(A)\cap\operatorname*{int}C\neq\emptyset$.
Then $A+N_{C}$ is maximal monotone. \end{theorem}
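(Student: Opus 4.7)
Monotonicity of $M:=A+N_{C}$ is immediate. For maximality, suppose $(x,x^{*})$ is monotonically related to $M$ and aim at $(x,x^{*})\in M$. Fix $c_{0}\in\operatorname*{cen}D(A)\cap\operatorname*{int}C$. Since $N_{C}(v)=\{0\}$ for $v\in\operatorname*{int}C$, one has $A|_{\operatorname*{int}C}\subset M$, whence $(x,x^{*})$ is monotonically related to $A|_{\operatorname*{int}C}$. Moreover, for every $y\in D(A)\cap C$, $a^{*}\in A(y)$, $k^{*}\in N_{C}(y)$, and $t\ge0$, the pair $(y,a^{*}+tk^{*})\in M$; letting $t\to\infty$ in $\langle x-y,x^{*}-a^{*}-tk^{*}\rangle\ge0$ forces $\langle x-y,k^{*}\rangle\le0$.

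I would first establish $x\in C$. If $D(A)\subset\operatorname*{int}C$, then $M=A$ and maximality is inherited from $A$; otherwise, because $c_{0}\in\operatorname*{int}C$ and $[c_{0},z]\subset D(A)$ for every $z\in D(A)$, the set $D(A)\cap\partial C$ is nonempty. Assuming $x\notin C$, use Hahn--Banach to obtain $\xi^{*}\in X^{*}$ with $\sup_{C}\xi^{*}<\langle x,\xi^{*}\rangle$, and exploit the centrality of $c_{0}$ to locate $y\in D(A)\cap\partial C$ and $k^{*}\in N_{C}(y)$ satisfying $\langle x-y,k^{*}\rangle>0$: the segments $[c_{0},z]$ can be steered toward the exposed face of $C$ by $\xi^{*}$, and a limit argument (controlled by the monotone relation itself, which bounds the relevant dual quantities) handles non-attainment of the supremum. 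This contradicts the previous observation.

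Next I would apply weak-(FPV) to obtain $x\in D(A)$: with $V=\operatorname*{int}C$ directly when $x\in\operatorname*{int}C$, and with a small enlargement $V=\operatorname*{int}(C+\varepsilon B)$ (for $B$ an open ball, $\varepsilon>0$ small) when $x\in\partial C$. In the latter case, verify that $(x,x^{*})$ is monotonically related to $A|_{V}$ by sliding each $y\in V\cap D(A)$ into $\operatorname*{int}C\cap D(A)$ along $[c_{0},y]\subset D(A)$ and passing to a limit. With $x\in D(A)\cap C$ in hand, extend the monotone relation from $A|_{\operatorname*{int}C}$ to all of $A$ via the same inward-sliding, so that the maximality of $A$ (for $x\in\operatorname*{int}C$) or a Hahn--Banach separation in $X^{*}$ between the closed convex sets $x^{*}-A(x)$ and $N_{C}(x)$ (for $x\in\partial C$) delivers the decomposition $x^{*}\in A(x)+N_{C}(x)$.

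The hard part will be Step~1, the exhibition of $y\in D(A)\cap\partial C$ and $k^{*}\in N_{C}(y)$ violating the monotone relation when $x\notin C$: $D(A)$ may meet $\partial C$ only along a thin subset, and the relevant normal cones may miss the separating functional $\xi^{*}$. The centrality hypothesis $c_{0}\in\operatorname*{cen}D(A)$ is indispensable because it permits directional propagation of points of $D(A)$ to $\partial C$, and must be combined with a limit/boundedness argument drawn from the monotone relation itself. A secondary subtlety is the enlargement of the witness $V$ beyond $\operatorname*{int}C$ in Step~2, which again leans on the star-shape of $D(A)$ at $c_{0}$.
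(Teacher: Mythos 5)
Your plan attempts a direct verification of maximality: take $(x,x^{*})$ monotonically related to $M=A+N_{C}$ and produce the decomposition $x^{*}\in A(x)+N_{C}(x)$. Under the stated hypotheses this final step is out of reach, and two of your intermediate devices are invalid. First, the ``inward-sliding'' you invoke twice (to show $(x,x^{*})$ is m.r.t. $A|_{V}$ for the enlarged $V$, and to ``extend the monotone relation from $A|_{\operatorname*{int}C}$ to all of $A$'') does not work: the monotone inequalities available at the slid points $(1-t)c_{0}+ty$ involve values of $A$ at those points, not at $y$, and they do not pass to $y$ in any limit; for $y\in D(A)\setminus C$ the relation with $M$ gives no information at all, since $N_{C}(y)=\emptyset$. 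Second, even granting $x\in D(A)\cap\operatorname*{int}C$ and that $(x,x^{*})$ is m.r.t. $A|_{\operatorname*{int}C}$, concluding $x^{*}\in A(x)$ is exactly the (FPV) property of $A$ on $V=\operatorname*{int}C$; the hypothesis weak-(FPV) only yields $x\in D(A)$, which is strictly weaker, and the maximality of $A$ cannot be applied because you never obtain the monotone relation against points of $A$ lying outside $C$. At boundary points, your separation of $x^{*}-A(x)$ from $N_{C}(x)$ would in addition require $A(x)+N_{C}(x)$ to be weak-star closed, which is not guaranteed. Finally, Step 1 ($x\in C$) is only a sketch: when $x\notin D(A)$ there is no segment $[c_{0},x]\subset D(A)$ to push to $\operatorname*{Fr}C$, and $D(A)\cap\operatorname*{Fr}C$ need not come anywhere near the face of $C$ exposed by your separating functional $\xi^{*}$, so the ``steering'' argument has no justification.

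The paper avoids all of this by not proving graph membership directly. It uses that $A+N_{C}$ is representable (\cite[Cor.\ 5.6]{tscr}), so maximality follows once $A+N_{C}$ is NI, i.e., once one rules out points with the \emph{strict} inequality $\varphi_{A+N_{C}}(z)<c(z)$; this is a weaker task than deciding graph membership for every monotonically related point. Weak-(FPV) enters only to force domain membership: if $x\in D(A)$, a supporting functional at the point where $[0,x]$ crosses $\operatorname*{Fr}C$ contradicts the normal-cone inequalities; if $x\notin D(A)$, applying weak-(FPV) on $V_{n}=[0,x]+\frac{1}{n}U$ together with a limiting argument on unit normals shows $x\in\operatorname*{Fr}C$, and then the strict inequality allows scaling to some $tz$ with $t\in(0,1)$ so that $tx\in\operatorname*{int}C$, where weak-(FPV) gives $tx\in D(A)$ and hence $\varphi_{A+N_{C}}(tz)\ge c(tz)$, a contradiction. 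This mechanism is precisely what lets the hypothesis be weak-(FPV) rather than (FPV); your direct route would need the stronger property plus a nontrivial closedness/decomposition argument at the boundary.
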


\begin{proof} Without loss of generality we may assume that $0\in\operatorname*{cen}D(A)\cap\operatorname*{int}C$,
$0\in A0$, and for some $r>0$, $rU\subset\operatorname*{int}C$,
where $U$ denotes the unit open ball in $X$. Since $A+N_{C}$ is
representable (see \cite[Cor.\ 5.6]{tscr}) it remains to prove that
$A+N_{C}$ is NI (see \cite[Remark.\ 3.5]{tscr}). Assume by contradiction
that $A+N_{C}$ is not NI, that is, there is $z=(x,x^{*})\in[\varphi_{A+N_{C}}<c]:=\{w\in X\times X^{*}\mid\varphi_{A+N_{C}}(w)<c(w)\}$.
Since for every $y\in C$, $N_{C}(y)$ is a cone, note that

$\bar{z}=(\bar{x},\bar{x}^{*})\ {\rm is\ m.r.t.}\ A+N_{C}\Leftrightarrow$

\vspace{-.3cm}

\begin{equation}
\bar{z}\ {\rm is\ m.r.t.}\ A|_{C}\ {\rm and}\ \langle\bar{x}-a,x^{*}\rangle\le0,\ a\in D(A)\cap C,\ x^{*}\in N_{C}(a).\label{mrts}\end{equation}

Therefore $z$ is m.r.t. $A|_{C}$ and \begin{equation}
\langle x-a,x^{*}\rangle\le0,\ a\in D(A)\cap C,\ x^{*}\in N_{C}(a).\label{mrt0}\end{equation}

Assume that $x\in D(A)$. Since $z\in[\varphi_{A+N_{C}}<c]$
we know that $x\not\in C$ (see \cite[Prop.\ 2.1\ (d)]{tscr}). Therefore,
there is $\mu\in(0,1)$ such that $\mu x\in D(A)\cap\operatorname*{Fr}C$
(recall that $0\in\operatorname*{cen}D(A)\cap\operatorname*{int}C$).
Take $x^{*}\in N_{C}(\mu x)$, such that $\langle\mu x-y,x^{*}\rangle>0$,
for every $y\in\operatorname*{int}C$; whence $\langle x,x^{*}\rangle>0$,
because $0\in\operatorname*{int}C$ and $\mu>0$. From
(\ref{mrt0}) applied for $a=\mu x$ and since $\mu<1$ one gets the
contradiction $\langle x,x^{*}\rangle\le0$.

Therefore $x\not\in D(A)$. For $n\ge1$, let $V_{n}:=[0,x]+\frac{1}{n}U$.
Notice that $V_{n}$ is open convex, $V_{n}\cap D(A)\neq\emptyset$,
and $x\in V_{n}$, $n\ge1$. Since $A$ is weak-(FPV), for every $n\ge1$,
there is $z_{n}=(a_{n},a_{n}^{*})\in A$ such that $a_{n}\in V_{n}$
and $c(z-z_{n})<0$. This implies that $a_{n}\in D(A)\setminus C$,
because $z$ is m.r.t. $A|_{C}$. Hence there is $t_{n}\in(0,1)$
such that $x_{n}=t_{n}a_{n}\in\operatorname*{Fr}C\cap D(A)$, since
$0\in\operatorname*{cen}D(A)\cap\operatorname*{int}C$. Let $x_{n}^{*}\in N_{C}(x_{n})$,
$\|x_{n}^{*}\|=1$, $n\ge1$. Because $x_{n}\in V_{n}$ there is $\lambda_{n}\in[0,1]$
such that $\|x_{n}-\lambda_{n}x\|\le\frac{1}{n}$, $n\ge1$. On a
subnet, denoted by the same index for simplicity, we may assume that
$\lambda_{n}\rightarrow\lambda\in[0,1]$, $x_{n}\rightarrow\lambda x\in\operatorname*{Fr}C$,
strongly in $X,$ $x_{n}^{*}\rightarrow x^{*}\in N_{C}(\lambda x)$,
weakly-star in $X^{*}$ as $n\rightarrow\infty$. Note that $\lambda>0$
because $\lambda x\in\operatorname*{Fr}C$ and $0\in\operatorname*{int}C$.

By the monotonicity of $N_{C}$ for $0\in N_{C}(ru)$, $\|u\|<1$,
we get $\langle x_{n}-ru,x_{n}^{*}\rangle\ge0$ or $\langle x_{n},x_{n}^{*}\rangle\ge r$,
$n\ge1$. Let $n\rightarrow\infty$ to find $\langle x,x^{*}\rangle\ge r/\lambda>0$.
From (\ref{mrt0}) we have \textcolor{black}{$\langle x-x_{n},x_{n}^{*}\rangle\le0,$
and after we pass to limit, we get $(1-\lambda)\langle x,x^{*}\rangle\le0$,
$\lambda=1,$ and so $x\in\operatorname*{Fr}C$.}

Consider $f(t)=(\varphi_{A+N_{C}}-c)(tz)$, $t\in\mathbb{R}$;
$f$ is continuous on its domain (an interval) with $f(0)=0$ and
$f(1)<0$. Therefore there is $0<t<1$ such that $f(t)<0$. This implies
that $tz$ is m.r.t $A+N_{C}$ (in particular, according to (\ref{mrts}),
$tz$ is m.r.t. $A|_{C}$) with $tx\in\operatorname*{int}C$, so $tx\in D(A)$,
since $A$ is weak-(FPV). From $tx\in D(A)\cap C$ we get the contradiction
$f(tz)\ge0$, that is, $\varphi_{A+N_{C}}(tz)\ge c(tz)$ (see again
\cite[Prop.\ 2.1\ (d)]{tscr}).

This contradiction occurred due to the consideration
of the assumption that $A+N_{C}$ is not NI. Hence $A+N_{C}$ is NI
and consequently maximal monotone (see \cite[Th.\ 3.4]{tscr}). \end{proof}

\strut

Therefore \cite[Lemma\ 41.3]{MR1723737} and its consequence \cite[Th.\ 41.5]{MR1723737}
are true. We mention also that a multi-valued version of \cite[Th.\ 41.6]{MR1723737}
has been proved in \cite{Bau-arxiv}.

\end{document}